\newcommand{\average}{{\mathchoice {\kern1ex\vcenter{\hrule
height.4pt width 6pt depth0pt} \kern-9.7pt}
{\kern1ex\vcenter{\hrule height.4pt width 4.3pt depth0pt}
\kern-7pt} {} {} }}
\newcommand{\R}{{\mathbb R}} 
\newcommand{\e}{\varepsilon}
\newcommand{\p}{\partial}
\renewcommand{\div}{\mbox{div}\,}
\theoremstyle{plain}
\newtheorem{thm}{Theorem}[section]
\newtheorem{cor}[thm]{Corollary}
\newtheorem{lem}[thm]{Lemma}
\newtheorem{rem}[thm]{Remark}
\theoremstyle{definition}
\newtheorem{defn}[thm]{Definition}
\title[Optimal H\"older regularity in 2D]{On optimal H\"older regularity of solutions to the equation $\Delta u+b\cdot\nabla u=0$ in two dimensions}
\author{Nam Q. Le}
\address{Department of Mathematics, Indiana University, Bloomington, IN 47405, USA}
\email{nqle@indiana.edu}
\subjclass[2010]{35B65, 35J15, 42B30}
\keywords{Hardy space, H\"older continuity, nonlinear Hodge decomposition}
\def\l@subsection{\@tocline{2}{0pt}{2.5pc}{5pc}{}}
\begin{document}

\begin{abstract}
 We show that for an $L^2$ drift $b$ in two dimensions, if the Hardy norm of $\text{div~}b$
 is small,  then the weak solutions to  $\Delta u+b\cdot\nabla u=0$ have the same optimal H\"older regularity as in the case of divergence-free drift, that is, $u\in C^{\alpha}_{\text{loc}}$ for all $\alpha\in (0, 1)$. 
\end{abstract}

 \maketitle

\section{Statement of the main result}

In this note, we revisit the local (optimal) H\"older continuity of $W^{1, 2}$ scalar solutions to 
\begin{equation}\Delta u + b\cdot\nabla u =0
 \label{the_eq}
\end{equation}
in two dimensions
where the drift $b= (b_1, b_2)$ is an $L^2_{\text{loc}}$ vectorfield in $\R^2$. 
When $\div b=0$, Filonov \cite[Theorem 1.2]{F} shows that
$u\in W^{2, q}_{\text{loc}}$ for all $q\in (1, 2)$ and hence the optimal H\"older regularity for $u$: $u\in C^{\alpha}_{\text{loc}}$ for all $\alpha\in (0, 1)$.  In this note, we show that the same 
conclusions hold if 
the $\mathcal{H}^{1}(\R^2)$ Hardy norm of $\text{div~}b$
 is small. The Hardy space $\mathcal{H}^{1}(\R^2)$ will be recalled in Section \ref{prelim_sec}. 
 \begin{thm}
\label{mainthm} Let $\Omega$ be an open, bounded and connected domain in $\R^2$.
Let $b\in L_{\text{loc}}^2 (\R^2; \R^2)$ and let $u\in W^{1, 2}(\Omega)$ satisfy (\ref{the_eq}) in $\Omega$ in the sense of distributions. 
 There is a small, positive constant $\e_0$ such that if $\|\div b\|_{\mathcal{H}^1(\R^2)}\leq \e_0$ 
 then $u\in W^{2, q}_{\text{loc}}(\Omega)$ for all $q\in (1, 2)$ and hence, $u\in C^{\alpha}_{\text{loc}}(\Omega)$ for all $\alpha\in (0, 1)$.
 \end{thm}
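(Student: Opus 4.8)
The plan is to exploit the two-dimensional structure together with the smallness of the Hardy norm to recover, up to a small perturbation, the divergence-free situation handled by Filonov. The key observation is that in $\R^2$ the drift admits a Hodge-type decomposition. Writing $b = \nabla \phi + \nabla^\perp \psi$ where $\nabla^\perp = (-\partial_2, \partial_1)$, the gradient part carries all of the divergence, since $\div b = \Delta \phi$, while the rotational part $\nabla^\perp \psi$ is automatically divergence-free. Thus I would first solve $\Delta \phi = \div b$ in $\R^2$. The crucial analytic input is that $\div b \in \mathcal H^1(\R^2)$ with small norm: by Calder\'on--Zygmund theory adapted to the Hardy space, the second derivatives $D^2\phi$ lie in $\mathcal H^1(\R^2)$ (equivalently in $L^1$ with control), so that $\nabla\phi$ has better-than-$L^2$ integrability and, more importantly, a small norm controlled by $\e_0$.

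\medskip

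With this decomposition in hand, equation (\ref{the_eq}) becomes
\begin{equation}
\Delta u + (\nabla\phi + \nabla^\perp\psi)\cdot\nabla u = 0,
\label{decomp_eq}
\end{equation}
and the term $\nabla^\perp\psi\cdot\nabla u$ is exactly the divergence-free drift contribution that Filonov's method treats. The strategy is then to regard $\nabla\phi\cdot\nabla u$ as a forcing term and to set up a fixed-point or continuity argument: I would estimate $u$ in $W^{2,q}_{\text{loc}}$ for $q\in(1,2)$ by applying the divergence-free regularity theory to the operator $\Delta\,\cdot\, + \nabla^\perp\psi\cdot\nabla(\cdot)$, absorbing the small gradient-part perturbation. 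Concretely, I expect to write the estimate schematically as
\begin{equation}
\|u\|_{W^{2,q}_{\text{loc}}} \le C\bigl(\|u\|_{W^{1,2}} + \|\nabla\phi\cdot\nabla u\|_{?}\bigr),
\label{apriori}
\end{equation}
and then bound $\|\nabla\phi\cdot\nabla u\|$ by something like $\|\nabla\phi\|\,\|\nabla u\|$ with the first factor controlled by $C\e_0$ through the Hardy-space bound on $D^2\phi$. Choosing $\e_0$ small makes the perturbation absorbable into the left side, closing the estimate.

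\medskip

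The structural heart of the matter, and the reason the Hardy norm rather than merely an $L^1$ or $L^2$ norm of $\div b$ is the right hypothesis, is the interplay between the div-curl product $\nabla\phi\cdot\nabla u$ and the Hardy space. By the celebrated Coifman--Lions--Meyer--Semmes div-curl lemma, a product of a gradient field and a divergence-free field lies in $\mathcal H^1$, and the duality $\mathcal H^1$--$\mathrm{BMO}$ (together with the two-dimensional Sobolev embedding $W^{1,2}\hookrightarrow \mathrm{BMO}$) is precisely what gives borderline control without loss of integrability. I anticipate that the main obstacle is making this borderline estimate rigorous in the \emph{local} setting: one must introduce cutoffs, verify that the cutoff errors do not destroy the cancellation structure that the Hardy space exploits, and ensure that the smallness of $\|\div b\|_{\mathcal H^1}$ survives localization. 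A secondary difficulty is justifying the a priori estimate (\ref{apriori}) for genuine $W^{1,2}$ weak solutions rather than smooth ones, which I would handle by a standard approximation and mollification scheme, proving the bound for regularized data and passing to the limit using the uniform control furnished by $\e_0$.
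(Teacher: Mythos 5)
Your first step coincides with the paper's: you perform the linear Hodge decomposition $b=\nabla\phi+\nabla^{\perp}\psi$ with $\Delta\phi=\div b$, and the Hardy hypothesis does give smallness of the gradient part (the paper's Theorem \ref{HardyRHS} yields $\|\phi\|_{L^{\infty}}+\|\nabla\phi\|_{L^{2}}\leq C\|\div b\|_{\mathcal{H}^1(\R^2)}$; note this is smallness in $L^2$ plus an $L^\infty$ bound on $\phi$ itself, not ``better-than-$L^2$ integrability'' of $\nabla\phi$). After that, however, your plan has a genuine gap. First, the div--curl structure is attached to the wrong term: the Coifman--Lions--Meyer--Semmes lemma puts $\nabla^{\perp}\psi\cdot\nabla u$ (a Jacobian) in $\mathcal{H}^1$, whereas your designated perturbation $\nabla\phi\cdot\nabla u$ is a product of two curl-free fields, has no cancellation structure, and lies only in $L^1$. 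Second, the absorption scheme (\ref{apriori}) cannot close at this borderline: with a forcing term controlled only in $L^1$, two-dimensional elliptic theory returns $\nabla u$ in weak-$L^2$, not $L^2$, so the right-hand side $\e_0\|\nabla u\|_{L^2}$ cannot be absorbed into a left-hand side measured in a strictly weaker norm. This is precisely the critical scaling ($b\in L^2$, $\nabla u\in L^2$, product in $L^1$) that defeats naive perturbation. Third, Filonov's theorem for divergence-free drifts is not an a priori estimate for the inhomogeneous operator $\Delta+\nabla^{\perp}\psi\cdot\nabla$ of the form you posit; it is a qualitative regularity statement conditional on uniqueness for the homogeneous Dirichlet problem.

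The paper's way around all three obstacles is a genuinely nonlinear device that your proposal lacks: starting from $b=\nabla^{\perp}\xi-\nabla p$ and $P=e^{p}$ (here the $L^\infty$ bound on $p$ is what makes $P^{\pm1}$ bounded), a fixed point argument in the spirit of Rivi\`ere produces $A\in W^{1,2}\cap L^{\infty}$ with $A^{-1}\in L^{\infty}$ and $B\in W^{1,2}$ such that $Ab=\nabla A+\nabla^{\perp}B$ (Theorem \ref{bthm}). This integrating factor converts the equation into the conservation law $\div(A\nabla u-B\nabla^{\perp}u)=0$, whose right-hand side is entirely of Jacobian type, and then Rivi\`ere's $W^{1,p}_{\text{loc}}$ theory (or, in the alternative proof, Filonov's uniqueness criterion verified via the same decomposition) delivers $u\in W^{2,q}_{\text{loc}}$ for all $q\in(1,2)$. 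To repair your argument you would need to supply this construction, or an equivalent mechanism that restores a compensated-compactness structure to the full drift term rather than only to its divergence-free part.
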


If the condition 
$\div b=0$ is dropped, then, as pointed out in \cite{F}, the solutions of (\ref{the_eq}) are not continuous nor bounded in general.
Note that,
equation (\ref{the_eq}) with the divergence-free drift $b$ appears in various models
in fluid mechanics; see, for example \cite{FV, SSSZ} and the references therein. These papers also establish several regularity results, including H\"older continuity, for solutions 
to (\ref{the_eq}) when the divergence-free drift $b$ has low integrability. In 2D,  in order to obtain the 
H\"older 
continuity of the solutions $u\in W^{1,2}$ to (\ref{the_eq}), that is, $u\in C_{\text{loc}}^{\alpha}(\Omega)$ for {\it some} $\alpha\in (0, 1)$, it suffices to assume that $\div b\in\mathcal{H}^{1}(\R^2)$. This follows from revisiting the arguments of Bethuel \cite{B}.

Denote the rotation of the gradient vector in 2D by $\nabla^{\perp} v= (-\p_2 v, \p_1 v).$
Theorem \ref{mainthm} easily gives:

\begin{cor}
\label{maincor}
Let $\Omega$ be an open, bounded and connected domain in $\R^2$. Assume that $b= h \nabla^{\perp} v$ where $h\in L^{\infty}(\Omega)\cap W^{1,2}(\Omega)$ and $v\in W^{1,2}(\Omega)$.
Let $u\in W^{1, 2}(\Omega)$ satisfy (\ref{the_eq}) in $\Omega$. 
Then
 $u\in W^{2, q}_{\text{loc}}(\Omega)$ for all $q\in (1, 2)$ and hence, $u\in C^{\alpha}_{\text{loc}}(\Omega)$ for all $\alpha\in (0, 1)$.
\end{cor}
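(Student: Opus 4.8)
The plan is to reduce Corollary \ref{maincor} to Theorem \ref{mainthm} by showing that, after a suitable localization, the drift $b = h\nabla^{\perp}v$ has divergence with arbitrarily small Hardy norm. The starting point is an algebraic observation: since $\nabla^{\perp}v$ is divergence free in the sense of distributions, one computes
\begin{equation}
\div b = \div(h\nabla^{\perp}v) = \nabla h\cdot\nabla^{\perp}v = \p_2 h\, \p_1 v - \p_1 h\, \p_2 v,
\label{div_b}
\end{equation}
which is, up to sign, the Jacobian determinant $\det(\nabla h,\nabla v)$ of the pair $(h,v)$. For $h,v\in W^{1,2}$ with $h\in L^{\infty}$ the right-hand side lies in $L^1(\R^2)$, and the celebrated div-curl result of Coifman, Lions, Meyer and Semmes guarantees that such a Jacobian in fact belongs to $\mathcal{H}^1(\R^2)$, with the bilinear bound $\|\det(\nabla h,\nabla v)\|_{\mathcal{H}^1(\R^2)}\le C\|\nabla h\|_{L^2}\|\nabla v\|_{L^2}$.

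The difficulty is that Theorem \ref{mainthm} requires this norm to be below the threshold $\e_0$, whereas the product $\|\nabla h\|_{L^2}\|\nabla v\|_{L^2}$ is in general of order one. Since the conclusion is purely local, I would fix $x_0\in\Omega$, choose a ball $B_{2r}(x_0)\subset\subset\Omega$, pick cutoffs $\eta,\zeta\in C_c^{\infty}(B_{2r}(x_0))$ equal to $1$ on $B_r(x_0)$, and replace the data by $\tilde h = \eta h$ and $\tilde v = \zeta(v - c_r)$, where $c_r$ is the mean of $v$ over the annulus $B_{2r}(x_0)\setminus B_r(x_0)$. Then $\tilde h,\tilde v\in W^{1,2}(\R^2)$ have compact support, and because $\eta = \zeta = 1$ and $\nabla\tilde v = \nabla v$ on $B_r(x_0)$ (subtracting a constant from $v$ does not change $\nabla^{\perp}v$), the localized drift $\tilde b = \tilde h\nabla^{\perp}\tilde v$ coincides with $b$ on $B_r(x_0)$; in particular $u$ still solves $\Delta u + \tilde b\cdot\nabla u = 0$ there in the sense of distributions.

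The crux is the asymmetric estimate of the two $L^2$ factors as $r\to 0$. Writing $\nabla\tilde v = \zeta\nabla v + (v - c_r)\nabla\zeta$, the first term is controlled by $\|\nabla v\|_{L^2(B_{2r})}$ and the second, by the Poincar\'e inequality on the annulus together with $|\nabla\zeta|\le C/r$, by $C\|\nabla v\|_{L^2(B_{2r}\setminus B_r)}$; hence $\|\nabla\tilde v\|_{L^2(\R^2)}\to 0$. By contrast I cannot subtract a constant from $h$, since $b$ depends on $h$ itself and not merely on $\nabla h$; here the hypothesis $h\in L^{\infty}$ is essential, for $\nabla\tilde h = \eta\nabla h + h\nabla\eta$ satisfies $\|h\nabla\eta\|_{L^2}\le\|h\|_{L^{\infty}}\|\nabla\eta\|_{L^2}\le C\|h\|_{L^{\infty}}$, so that $\|\nabla\tilde h\|_{L^2(\R^2)}$ stays bounded by a constant $M = M(\|h\|_{W^{1,2}},\|h\|_{L^{\infty}})$ independent of $r$. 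Combining this with the CLMS bound applied to \eqref{div_b} for $\tilde b$ gives $\|\div\tilde b\|_{\mathcal{H}^1(\R^2)}\le C M\|\nabla\tilde v\|_{L^2}\to 0$, so for $r$ small enough this norm drops below $\e_0$. Applying Theorem \ref{mainthm} on the ball $B_r(x_0)$ then yields $u\in W^{2,q}_{\text{loc}}(B_r(x_0))$ for all $q\in(1,2)$; letting $x_0$ range over $\Omega$ gives $u\in W^{2,q}_{\text{loc}}(\Omega)$, and the Sobolev embedding $W^{2,q}\hookrightarrow C^{\alpha}$ produces the full range $\alpha\in(0,1)$. The only genuine obstacle is precisely this asymmetry: one must realize that only one of the two gradient factors needs to be made small, and that the mean-subtraction device achieving it is available for $v$ but not for $h$, which is exactly why the boundedness of $h$ is assumed.
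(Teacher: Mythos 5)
Your argument is correct and follows the same overall route as the paper: recognize $\div b=\nabla h\cdot\nabla^{\perp}v$ as a Jacobian, invoke the Coifman--Lions--Meyer--Semmes estimate (Theorem \ref{Hardythm}) to place it in $\mathcal{H}^1(\R^2)$ with the bilinear bound, localize around each point so that the Hardy norm drops below the threshold $\e_0$, and apply Theorem \ref{mainthm}. The only real difference is the localization device. The paper rescales $B_r(x_0)$ to $B_1(0)$, subtracts the mean from \emph{both} $h$ and $v$, controls the resulting $W^{1,2}(B_1(0))$ norms by $\|\nabla h\|_{L^2(B_r(x_0))}$ and $\|\nabla v\|_{L^2(B_r(x_0))}$ via Poincar\'e, and then extends to $\R^2$; since the two--dimensional rescaling preserves the $L^2$ norm of the gradient, \emph{both} factors in the CLMS bound tend to zero as $r\to 0$. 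Your cutoff construction instead leaves $\|\nabla(\eta h)\|_{L^2}$ merely bounded (because of the $h\nabla\eta$ term) and extracts all the smallness from the $v$--factor alone; this is a valid variant, but the ``asymmetry'' you single out as the genuine obstacle is an artifact of multiplying by a cutoff rather than rescaling --- a mean can perfectly well be subtracted from $h$ too, since only $\nabla \tilde h$ enters the divergence. Relatedly, the hypothesis $h\in L^{\infty}(\Omega)$ is not there primarily to rescue that asymmetry: it is what guarantees $b=h\nabla^{\perp}v\in L^2$ in the first place, which Theorem \ref{mainthm} requires of the drift (and which your localized $\tilde b$ also needs).
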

\begin{rem} 
 The drift $b$ in Corollary \ref{maincor} satisfies $\div b \in \mathcal{H}^1$ and thus the result of Bethuel \cite{B} (see
also  \cite{HSZ} for system) already implies the 
 H\"older continuity of $u$, that is, $u\in C_{\text{loc}}^{\alpha}(\Omega)$ for some $\alpha\in (0, 1)$. The novelty of Corollary \ref{maincor} is that it 
 gives further and optimal regularity results for $u$. 
 \end{rem}
Our proof of Theorem
\ref{mainthm}
uses the Uhlenbeck-Rivi\`ere decomposition (also known as the nonlinear Hodge decomposition) and the integration by compensation to convert (\ref{the_eq}) into a {\it conservation law}. This circle of ideas was
inspired by Rivi\`ere's proof of Heinz-Hildebrandt's conjecture \cite{R1}. 
We also give another proof of Theorem \ref{mainthm} using the uniqueness result for
(\ref{the_eq}) following Filonov \cite{F}.  It is interesting to note that, {\it although (\ref{the_eq}) is linear, our arguments are non-linear.}  A key ingredient in the proof of Theorem \ref{mainthm} is the local structure of the $L^2$ vectorfields $b$ whose $\div b $ have small Hardy norm.
\begin{thm}
\label{bthm} Assume that $\Omega= B_1(0)\subset\R^2$ and $b\in L^2_{\text{loc}}(\R^2;\R^2)$. There exists a positive constant $\e_1$ with the following property.
If 
$\|b\|_{L^2({\Omega})} + \|\div b\|_{\mathcal{H}^1(\R^2)}\leq \e_1$ then
 there exist $A\in W^{1,2}(\Omega)\cap L^{\infty}(\Omega)$ and $B\in W^{1,2}(\Omega)$ such that
 $A^{-1}\in L^{\infty}(\Omega)$,  $A=1$ on $\p\Omega$, $\int_\Omega B=0$, and
 \begin{equation*}b=A^{-1}\nabla A + A^{-1}\nabla^{\perp} B.
 \end{equation*}

\end{thm}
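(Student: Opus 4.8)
The plan is to reduce the factorization to solving a single scalar elliptic equation for the gauge $A$, and then to recover $B$ by the Poincar\'e lemma. Writing the desired identity as $Ab=\nabla A+\nabla^{\perp}B$ and taking divergences, the natural equation for $A$ is
\[
\Delta A=\div(Ab)\quad\text{in }\Omega,\qquad A=1\ \text{on }\p\Omega .
\]
Once such an $A\in W^{1,2}(\Omega)\cap L^{\infty}(\Omega)$ with $A^{-1}\in L^{\infty}(\Omega)$ is found, the field $Ab-\nabla A\in L^2(\Omega;\R^2)$ is divergence-free by construction; since $\Omega=B_1(0)$ is simply connected, the Poincar\'e lemma produces $B\in W^{1,2}(\Omega)$ with $\nabla^{\perp}B=Ab-\nabla A$, normalized by $\int_\Omega B=0$. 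This $B$ lies in $W^{1,2}$ because $Ab\in L^2$ (as $A\in L^\infty$, $b\in L^2$) and $\nabla A\in L^2$. Thus the whole statement reduces to solving the above Dirichlet problem with the two-sided bound on $A$.

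To capture that $A$ should be close to $1$, I set $a=\log A$ and seek $a$ small in $L^\infty\cap W^{1,2}$. A direct computation turns the equation for $A$ into
\[
\Delta a=\div b+\nabla a\cdot(b-\nabla a)=\div b-\nabla(e^{-a})\cdot\nabla^{\perp}B,
\]
where I have used $b-\nabla a=e^{-a}\nabla^{\perp}B$. The crucial observation is that the nonlinear term is a Jacobian determinant of the two $W^{1,2}$ functions $e^{-a}$ and $B$; by the div--curl lemma of Coifman--Lions--Meyer--Semmes it therefore belongs to the Hardy space $\mathcal H^1(\R^2)$, with $\|\nabla(e^{-a})\cdot\nabla^{\perp}B\|_{\mathcal H^1}\lesssim\|\nabla(e^{-a})\|_{L^2}\|\nabla B\|_{L^2}$. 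Since $\div b\in\mathcal H^1$ is also given, the entire right-hand side lies in $\mathcal H^1$, and Wente's inequality (equivalently, that $\Delta^{-1}$ maps $\mathcal H^1(\R^2)$ into $L^\infty\cap\dot W^{1,2}$ in the plane) yields
\[
\|a\|_{L^\infty(\Omega)}+\|\nabla a\|_{L^2(\Omega)}\le C\big(\|\div b\|_{\mathcal H^1}+\|\nabla(e^{-a})\|_{L^2}\|\nabla B\|_{L^2}\big).
\]
Bounding $\|\nabla(e^{-a})\|_{L^2}\le e^{\|a\|_\infty}\|\nabla a\|_{L^2}$ and $\|\nabla B\|_{L^2}=\|e^{a}(b-\nabla a)\|_{L^2}\le e^{\|a\|_\infty}(\|b\|_{L^2}+\|\nabla a\|_{L^2})$ makes the right-hand side quadratically small in the unknowns, up to the data $\|\div b\|_{\mathcal H^1}+\|b\|_{L^2}\le\e_1$.

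I would then realize this as an existence proof either by a contraction-mapping iteration in a small ball of $W^{1,2}(\Omega)\cap L^\infty(\Omega)$ (updating $a$ from the Poisson equation with frozen nonlinearity and $B$ from $\nabla^{\perp}B=e^{a}(b-\nabla a)$), or by Uhlenbeck's continuity method along the family $tb$, $t\in[0,1]$. In either case the estimate above closes for $\e_1$ small: one gets $\|a\|_{L^\infty}+\|\nabla a\|_{L^2}\lesssim\e_1$, so that $A=e^{a}$ and $A^{-1}=e^{-a}$ are both in $L^\infty$ (indeed close to $1$) and $A\in W^{1,2}$, while $B\in W^{1,2}$ with $\int_\Omega B=0$. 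The main obstacle is precisely this two-sided $L^\infty$ control of $A$: in two dimensions $W^{1,2}\not\hookrightarrow L^\infty$, so the plain Calder\'on--Zygmund estimate for the Poisson equation only returns $A\in W^{1,2}$ and cannot prevent $A$ or $A^{-1}$ from degenerating. Overcoming this is exactly where the compensated-compactness structure enters: exhibiting the nonlinearity as a Jacobian upgrades it to $\mathcal H^1$ and invokes Wente's borderline gain of $L^\infty$, which is unavailable for a general $L^1$ (or even $L^2$-divergence) right-hand side. The remaining technical points---making sense of $\div(Ab)$ distributionally, transplanting the Hardy estimate to the bounded domain $B_1$ (by extension and a cutoff, or via the Green-function form of Wente's inequality), and justifying the chain rule for $\log A$---are routine once the smallness of $\e_1$ is in hand.
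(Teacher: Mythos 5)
Your overall strategy --- recast the factorization as a nonlinear gauge problem, exhibit the nonlinearity as a Jacobian of two $W^{1,2}$ functions so that Coifman--Lions--Meyer--Semmes places it in $\mathcal{H}^1(\R^2)$, and use Wente's borderline $L^\infty$ gain to close a small-data fixed point --- is essentially the paper's. The paper first performs the linear Hodge splitting $b=\nabla^{\perp}\xi-\nabla p$ with $\|p\|_{L^{\infty}}+\|\nabla p\|_{L^{2}}\leq C\|\div b\|_{\mathcal{H}^1(\R^2)}$, sets $P=e^{p}$ and runs the fixed point for $\tilde A=AP$; your substitution $a=\log A$ is the same device in different clothing, and your identification of the crux (two-sided $L^\infty$ control of $A$, unattainable from Calder\'on--Zygmund alone in 2D) is exactly right.

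There is, however, a genuine gap at the point where you close the argument, and it is precisely the step the paper devotes its Step 2 to. For a general iterate $\hat a$ the field $e^{\hat a}(b-\nabla\hat a)$ is \emph{not} divergence-free, so you cannot ``update $B$ from $\nabla^{\perp}B=e^{a}(b-\nabla a)$''; you can only prescribe the rotational part of $B$, e.g.\ by solving $\Delta B=\mathrm{curl}\,\bigl(e^{\hat a}(b-\nabla\hat a)\bigr)$ with suitable Neumann data. The fixed point you then produce satisfies a \emph{coupled scalar system} (your equation $\Delta a=\div b-\nabla(e^{-a})\cdot\nabla^{\perp}B$ together with the $B$-equation), but this does not by itself yield the \emph{vector} identity $b-\nabla a=e^{-a}\nabla^{\perp}B$, nor even that $A=e^{a}$ solves $\Delta A=\div (Ab)$: the two scalar equations constrain the residual $Ab-\nabla A-\nabla^{\perp}B$ but do not force it to vanish. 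The paper handles this by a further Hodge decomposition of the residual into $\nabla E+\nabla^{\perp}D$, killing $E$ via the interior equation and showing $D$ is constant via an integration by parts that crucially uses the boundary conditions $\tilde A=1$, $P=1$ and $\p B/\p\nu=b\cdot\tau$ on $\p B_1(0)$ --- boundary data for $B$ that your proposal never specifies. Without this compatibility step, the Poincar\'e-lemma recovery of $B$ in your first paragraph is unjustified, because you have not actually shown that your $A$ solves the scalar Dirichlet problem you reduced everything to. Note also that you cannot sidestep the coupling by iterating on $a$ alone via $\Delta a_{n+1}=\div b+\nabla a_{n}\cdot(b-\nabla a_{n})$: the term $\nabla a_{n}\cdot b$ is a bare product of $L^2$ functions, hence only $L^1$, and the Wente/Hardy gain is lost; the Jacobian structure is available only through the auxiliary $B$. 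The gap is fixable --- the fix is the paper's Step 2 --- but it is a real missing step, not a routine technicality.
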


The rest of the note is organized as follows. We recall Hardy spaces and related Wente's estimates in Sect. \ref{prelim_sec}. Assuming Theorem \ref{bthm}, we prove Theorem \ref{mainthm} in Sect. \ref{mainthm_sec}. 
We prove Theorem \ref{bthm} in Sect. \ref{bthm_sec}.

\section{Hardy spaces and related Wente's estimates}
\label{prelim_sec}
First, we recall the Hardy space $\mathcal{H}^1(\R^2)$, following H\'elein \cite[Section 3.2]{H}. For any function $f\in L^1 (\R^2)$, we denote the Riesz transforms $R_j f$ ($j=1,2$) of $f$ by
$\widehat{R_j f}(\xi)= \frac{\xi_j}{|\xi|} \hat f (\xi)~(\xi= (\xi_1,\xi_2)\in\R^2)$
where $\hat f$ is the Fourier transform of $f$:
$\hat f(\xi)=(2\pi)^{-1}\int_{\R^2} e^{-i x\cdot \xi} f(x) dx.$
\begin{defn}
 The Hardy space $\mathcal{H}^{1}(\R^2)$ with norm $\|\cdot\|_{\mathcal{H}^{1}(\R^2)}$ is defined by
 $$\mathcal{H}^{1}(\R^2)=\{f\in L^1(\R^2): \|f\|_{\mathcal{H}^{1}(\R^2)}= \|f\|_{L^1(\R^2)} +  \|R_1f\|_{L^1(\R^2)} + \|R_2 f\|_{L^1(\R^2)}<\infty\}.$$
\end{defn}
A basic observation is the following theorem, due to Coifman-Lions-Meyer-Semmes \cite{CLMS}:
\begin{thm}(\cite[Theorem 3.2.2]{H})
 \label{Hardythm}
If $u, v\in W^{1,2}(\R^2)$ then $\nabla u\cdot \nabla^{\perp} v\in \mathcal{H}^1(\R^2)$. Moreover, there is a uniform constant $C$ such that the following estimate holds:
$$\|\nabla u\cdot \nabla^{\perp} v\|_{\mathcal{H}^1(\R^2)}\leq C\|\nabla u\|_{L^2(\R^2)}\|\nabla v\|_{L^2(\R^2)}.$$
 \end{thm}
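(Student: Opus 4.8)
The plan is to establish the stronger, scale-invariant pointwise bound on a maximal function of $f:=\nabla u\cdot\nabla^{\perp} v$ and then invoke the Fefferman--Stein maximal-function characterization of $\mathcal{H}^1(\R^2)$. Recall that this characterization asserts: for a fixed normalized family $\mathcal{A}$ of test functions $\phi\in C_c^2(B_1(0))$ with $\|\phi\|_{C^2}\le 1$, the maximal function $f^{*}(x):=\sup_{\phi\in\mathcal{A}}\sup_{t>0}|(f*\phi_t)(x)|$, where $\phi_t(z)=t^{-2}\phi(z/t)$, satisfies $\|f\|_{\mathcal{H}^1(\R^2)}\simeq \|f^{*}\|_{L^1(\R^2)}$, this being equivalent to the Riesz-transform definition given above. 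Since $\nabla u,\nabla v\in L^2(\R^2)$, Cauchy--Schwarz already gives $f\in L^1(\R^2)$, so the convolutions $f*\phi_t$ are defined classically, and it suffices to bound $f^{*}$ in $L^1$.

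The first key point is the divergence structure $f=\div(u\,\nabla^{\perp} v)$, valid because $\div(\nabla^{\perp} v)=-\partial_1\partial_2 v+\partial_2\partial_1 v=0$ distributionally. Integrating by parts inside the convolution (legitimate since $u\,\nabla^{\perp} v\in L^1$ and $\phi\in C_c^2$) yields, for each $x$ and $t$,
\begin{equation*}
(f*\phi_t)(x)=\int_{\R^2} u(y)\,\nabla^{\perp} v(y)\cdot(\nabla\phi_t)(x-y)\,dy.
\end{equation*}
The same integration by parts applied to $\nabla^{\perp} v$ alone, together with $\div(\nabla^{\perp} v)=0$, shows that $\int \nabla^{\perp} v(y)\cdot(\nabla\phi_t)(x-y)\,dy=0$; hence we may replace $u(y)$ by $u(y)-c$ for any constant $c$. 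Choosing $c=\bar u_{x,t}$, the average of $u$ over the ball $B(x,t)$ on which $(\nabla\phi_t)(x-\cdot)$ is supported, and using $\|\nabla\phi_t\|_{L^\infty}\le C t^{-3}$, we obtain
\begin{equation*}
|(f*\phi_t)(x)|\le C t^{-3}\int_{B(x,t)}|u(y)-\bar u_{x,t}|\,|\nabla v(y)|\,dy.
\end{equation*}

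Next I would estimate the right-hand side by Hölder and a Sobolev--Poincaré inequality in a scale-invariant way. Fix exponents $p,r\in(1,2)$ with $\tfrac1p+\tfrac1r=\tfrac32$ and set $r'=r/(r-1)$, so that the Sobolev exponent $p^{*}=2p/(2-p)$ equals $r'$. Hölder's inequality gives $\int_{B(x,t)}|u-\bar u_{x,t}|\,|\nabla v|\le \|u-\bar u_{x,t}\|_{L^{r'}(B(x,t))}\|\nabla v\|_{L^{r}(B(x,t))}$, and the scale-invariant Sobolev--Poincaré inequality bounds $\|u-\bar u_{x,t}\|_{L^{r'}(B(x,t))}\le C\|\nabla u\|_{L^{p}(B(x,t))}$. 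Converting the two norms into averages over $B(x,t)$, the powers of $t$ combine to exactly cancel the factor $t^{-3}$ precisely because $\tfrac1p+\tfrac1r=\tfrac32$, leaving
\begin{equation*}
|(f*\phi_t)(x)|\le C\Big(\tfrac{1}{|B(x,t)|}\int_{B(x,t)}|\nabla u|^{p}\Big)^{1/p}\Big(\tfrac{1}{|B(x,t)|}\int_{B(x,t)}|\nabla v|^{r}\Big)^{1/r}\le C\,[M(|\nabla u|^{p})(x)]^{1/p}[M(|\nabla v|^{r})(x)]^{1/r},
\end{equation*}
where $M$ is the Hardy--Littlewood maximal operator. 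Taking the supremum over $t$ and $\phi\in\mathcal{A}$ gives the same bound for $f^{*}(x)$. Integrating in $x$, applying Hölder with the conjugate pair $(2,2)$, and then using the $L^2$-boundedness of $M$ (valid since $2/p>1$ and $2/r>1$), I obtain
\begin{equation*}
\|f^{*}\|_{L^1(\R^2)}\le C\big\|[M(|\nabla u|^{p})]^{1/p}\big\|_{L^2}\big\|[M(|\nabla v|^{r})]^{1/r}\big\|_{L^2}\le C\|\nabla u\|_{L^2(\R^2)}\|\nabla v\|_{L^2(\R^2)},
\end{equation*}
which by the Fefferman--Stein characterization is the claimed estimate.

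The step I expect to be the main obstacle is the choice of exponents strictly below $2$: the naive attempt with $p=r=2$ produces the bound $\int[M(|\nabla u|^2)\,M(|\nabla v|^2)]^{1/2}$, which cannot be closed because $M$ is unbounded on $L^1$. Taking $p,r\in(1,2)$ with $\tfrac1p+\tfrac1r=\tfrac32$ is exactly the device that lets the final Hölder-plus-maximal-function step land on $L^2$ bounds, where $M$ is bounded; the computational heart is checking that the powers of $t$ cancel for this choice (scale invariance) and that the Sobolev--Poincaré exponent relation $p^{*}=r'$ is compatible with it. A secondary technical point is justifying the integration by parts and the maximal-function characterization for merely $W^{1,2}$ data, which is handled by the density of smooth functions together with the global integrability $u\in L^2$, $\nabla v\in L^2$. (An alternative route is via $\mathcal{H}^1$--BMO duality, testing $f=\div(u\,\nabla^{\perp}v)$ against BMO functions and using a BMO--Poincaré inequality after the same mean-subtraction; the maximal-function argument above is, however, more self-contained.)
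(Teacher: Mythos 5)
Your proof is correct, and it is essentially the classical Coifman--Lions--Meyer--Semmes argument: the paper offers no proof of this statement, quoting it from \cite[Theorem 3.2.2]{H} and \cite{CLMS}, and the proof in those references is precisely your route --- the divergence structure $\nabla u\cdot\nabla^{\perp}v=\div(u\,\nabla^{\perp}v)$, mean subtraction inside the convolution, the scale-invariant Sobolev--Poincar\'e inequality with $\tfrac1p+\tfrac1r=\tfrac32$, and the Fefferman--Stein grand-maximal characterization of $\mathcal{H}^1$. All the key computations (the sign in the integration by parts, the cancellation of the powers of $t$, and the $L^{2/p}$-, $L^{2/r}$-boundedness of the Hardy--Littlewood maximal operator with $p,r<2$) check out.
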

We recall the following regularity result concerning solutions to the Laplace equation with Hardy right hand side. It follows from
combining Theorems 3.3.4, 3.3.8 and together equation (3.38) in \cite{H}.
\begin{thm}
\label{HardyRHS}
Let $\Omega$ be an open subset of $\R^2$, with $C^1$ boundary. Let $f\in\mathcal{H}^1(\R^2)$, and $\phi\in L^1_{loc}(\Omega)$ be a solution of
\begin{equation*}
\Delta \phi= f~\text{in}~ \Omega,~\text{and}~
\phi  =0~\text{on}~\p\Omega.
\end{equation*}
Then $\phi\in W^{1, 2}_0(\Omega)\cap C(\overline{\Omega})$ and there is a constant depending only on $\Omega$, $C(\Omega)$, such that
$$\|\phi\|_{L^{\infty}(\Omega)} + \|\nabla \phi\|_{L^2(\Omega)}\leq C(\Omega) \|f\|_{\mathcal{H}^1(\R^2)} .$$
\end{thm}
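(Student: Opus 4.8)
The plan is to reduce the statement to two structural facts: the Fefferman--Stein duality $(\mathcal{H}^1)^{*}=\mathrm{BMO}$, and the borderline Sobolev embedding $\dot W^{1,2}(\R^2)\hookrightarrow \mathrm{BMO}(\R^2)$, i.e. $\|v\|_{\mathrm{BMO}}\le C\|\nabla v\|_{L^2(\R^2)}$. The first elementary observation I would record is that every $f\in\mathcal{H}^1(\R^2)$ has mean zero: since $f\in L^1$ its Fourier transform is continuous, and requiring $R_jf\in L^1$ forces $\xi_j|\xi|^{-1}\hat f(\xi)$ to extend continuously through the origin, which is possible only if $\hat f(0)=\int f=0$. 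This mean-zero property is exactly what makes the Newtonian potential of $f$ well behaved and legitimizes the $\mathcal{H}^1$--$\mathrm{BMO}$ pairings below. To stay rigorous despite the merely $L^1_{\mathrm{loc}}$ a priori regularity of $\phi$, I would work first with approximants: choose $f_k\in C_c^{\infty}(\R^2)$ with $\int f_k=0$ and $f_k\to f$ in $\mathcal{H}^1$, and let $\phi_k$ be the classical solution of $\Delta\phi_k=f_k$ in $\Omega$, $\phi_k=0$ on $\p\Omega$.

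For the gradient bound I would test the equation for $\phi_k$ against $\phi_k$ itself, obtaining $\int_{\Omega}|\nabla\phi_k|^2=-\int_{\Omega}\phi_k f_k$. Extending $\phi_k\in W^{1,2}_0(\Omega)$ by zero to $\R^2$ and reading the right-hand side as an $\mathcal{H}^1$--$\mathrm{BMO}$ pairing (here the mean-zero property of $f_k$ lets only the $\mathrm{BMO}$ seminorm enter), I would estimate $|\int\phi_k f_k|\le C\|\phi_k\|_{\mathrm{BMO}(\R^2)}\|f_k\|_{\mathcal{H}^1}\le C\|\nabla\phi_k\|_{L^2(\Omega)}\|f_k\|_{\mathcal{H}^1}$, the last inequality being the $2$D endpoint Sobolev embedding applied to the zero-extension. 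Dividing through yields the uniform bound $\|\nabla\phi_k\|_{L^2(\Omega)}\le C\|f_k\|_{\mathcal{H}^1}$.

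For the $L^{\infty}$ bound and continuity I would split $\phi_k=w_k+\psi_k$, where $w_k=N*f_k$ is the whole-plane Newtonian potential with $N(x)=\tfrac{1}{2\pi}\log|x|$, and $\psi_k$ is the harmonic function in $\Omega$ with boundary data $-w_k|_{\p\Omega}$. The potential is controlled pointwise by duality: for each $x$ the function $y\mapsto\log|x-y|$ is a translate of $\log|\cdot|$, the model $\mathrm{BMO}$ function, whose seminorm is translation invariant, so $|w_k(x)|\le C\,\|\log|\cdot|\|_{\mathrm{BMO}}\,\|f_k\|_{\mathcal{H}^1}\le C\|f_k\|_{\mathcal{H}^1}$ uniformly in $x$. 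The maximum principle then gives $\|\psi_k\|_{L^{\infty}(\Omega)}\le\|w_k\|_{L^{\infty}(\p\Omega)}$, whence $\|\phi_k\|_{L^{\infty}(\Omega)}\le 2\|w_k\|_{L^{\infty}(\R^2)}\le C\|f_k\|_{\mathcal{H}^1}$; at the smooth level $\phi_k\in C(\overline{\Omega})$ is automatic, the point being only the uniform bound. Passing to the limit, the uniform estimates give $\phi_k\rightharpoonup\tilde\phi$ in $W^{1,2}_0(\Omega)$ with $\tilde\phi$ solving the limit problem and inheriting $\|\tilde\phi\|_{L^{\infty}}+\|\nabla\tilde\phi\|_{L^2}\le C\|f\|_{\mathcal{H}^1}$; a uniqueness argument (Weyl's lemma makes $\phi-\tilde\phi$ harmonic, and its zero boundary trace forces it to vanish) then identifies $\tilde\phi$ with the given $\phi$. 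For membership in $C(\overline{\Omega})$ I would transfer the splitting to the limit, using that $w=N*f$ is continuous on $\overline{\Omega}$ because $x\mapsto N(x-\cdot)$ varies continuously in $\mathrm{BMO}$ against the fixed $f\in\mathcal{H}^1$, while the harmonic corrector with continuous boundary data lies in $C(\overline{\Omega})$ thanks to the $C^1$ regularity of $\p\Omega$.

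The main obstacle, I expect, is not any individual estimate but the careful handling of the low starting regularity: justifying the energy identity and the duality pairings for a solution that is only $L^1_{\mathrm{loc}}$, and proving the uniqueness needed to match the limit of the approximants with the given $\phi$. Once the mean-zero property of $\mathcal{H}^1$, the maximum principle, and the two cited structural facts (Fefferman--Stein duality and $\dot W^{1,2}(\R^2)\hookrightarrow\mathrm{BMO}$) are in hand, the remaining steps are routine.
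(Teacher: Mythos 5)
Your overall architecture is sound and is, in substance, a self-contained reconstruction of the standard argument: the paper itself offers no proof of this statement, only a citation to H\'elein (Theorems 3.3.4, 3.3.8 and equation (3.38) of \cite{H}), and your combination of the energy identity with the Fefferman--Stein duality and $\dot W^{1,2}(\R^2)\hookrightarrow \mathrm{BMO}$ for the gradient bound, plus the splitting $\phi_k=N*f_k+\psi_k$ with the maximum principle for the $L^\infty$ bound, is exactly the right toolkit (your mean-zero observation for $\mathcal{H}^1$ is also correct and necessary). However, one step is genuinely false: the claim that $x\mapsto N(x-\cdot)$ ``varies continuously in $\mathrm{BMO}$,'' which you use to get continuity of $w=N*f$. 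Translations do not act continuously on $\mathrm{BMO}$, and for the logarithm the failure is total: writing $d_h(x)=\log|x-h|-\log|x|$ and $e=h/|h|$, one has $d_h(x)=d_e(x/|h|)$, and since the $\mathrm{BMO}$ seminorm is dilation invariant, $\|d_h\|_{\mathrm{BMO}}=\|d_e\|_{\mathrm{BMO}}=c_0>0$ for \emph{every} $h\neq 0$. So the difference of kernels has a fixed positive $\mathrm{BMO}$ norm no matter how small $|h|$ is, and the continuity mechanism you propose for $w$ collapses.

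The repair is cheap and already implicit in your own estimates: apply your pointwise duality bound to differences rather than to translates of the kernel. Since $w_k-w_j=N*(f_k-f_j)$ and $f_k-f_j$ has mean zero, your argument gives $\|w_k-w_j\|_{L^{\infty}(\R^2)}\leq C\|f_k-f_j\|_{\mathcal{H}^1(\R^2)}$, so $(w_k)$ is uniformly Cauchy and converges uniformly to a bounded \emph{continuous} $w$; the harmonic correctors $\psi_k$ then also converge uniformly on $\overline{\Omega}$ by the maximum principle (their boundary data $-w_k|_{\p\Omega}$ converge uniformly, and the Dirichlet problem with continuous data is solvable on a $C^1$, hence regular, bounded domain), yielding $\tilde\phi=w+\psi\in C(\overline{\Omega})$ vanishing on $\p\Omega$, with the stated bound. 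One further caveat you partially flagged but should make explicit: identifying $\tilde\phi$ with the given $\phi$ via ``harmonic plus zero boundary trace implies zero'' requires the boundary condition for the merely $L^1_{\text{loc}}$ function $\phi$ to be interpreted in a sense strong enough to run a maximum-principle or energy argument (e.g.\ $W^{1,2}_0$ membership or continuity up to the boundary with pointwise vanishing); this impreciseness is inherited from the statement itself, as in H\'elein's book, but your uniqueness step is not a proof until that interpretation is fixed.
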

Finally, we recall the following estimates  for boundary value problems with Jacobian structure right hand side in the theory of integration by compensation, due to Wente's \cite{W}; see also \cite{CLMS}.
\begin{lem} 
\label{HardyNeu} Let $\Omega$ be an open subset of $\R^2$, with $C^1$ boundary.
Suppose that $u, v\in W^{1,2}(\Omega)$ . Let $w$ be the unique solution in $ W^{1,p}(\Omega)$ for $1\leq p<2$ to the equation $\Delta w= \nabla u\cdot \nabla^{\perp} v~\mbox{ in}~ \Omega$, either with the Dirichlet condition
$w=0$ on $\p\Omega$, or with the Neumann boundary condition $\frac{\p w}{\p \nu}  =0$ on $\p\Omega$ and $ \int_{\Omega} w= 0$.
Then $w$ belongs to $C(\overline{\Omega})\cap W^{1,2}(\Omega)$ and there is a constant $C$ depending on $\Omega$ such that
$$\|w\|_{L^{\infty}(\Omega)} + \|\nabla w\|_{L^2(\Omega)}+  \|D^2 w\|_{L^1(\Omega)}\leq C \|\nabla u\|_{L^2(\Omega)}\|\nabla v\|_{L^2(\Omega)} .$$
\end{lem}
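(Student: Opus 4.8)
The plan is to push the problem to the whole plane, where the right-hand side becomes a bona fide Hardy-space function, and then combine the estimates of Section~\ref{prelim_sec} with Calder\'on--Zygmund theory on $\mathcal{H}^1$. First I would extend $u$ and $v$. Since $\Omega$ is bounded with $C^1$ boundary, I may replace $u,v$ by $u-\bar u$, $v-\bar v$ (their averages over $\Omega$), which leaves $\nabla u\cdot\nabla^\perp v$ unchanged, and then apply a bounded extension operator $E\colon W^{1,2}(\Omega)\to W^{1,2}(\R^2)$ to obtain $\tilde u=E(u-\bar u)$ and $\tilde v=E(v-\bar v)$. By the Poincar\'e--Wirtinger inequality $\|\nabla\tilde u\|_{L^2(\R^2)}\le C\|\nabla u\|_{L^2(\Omega)}$ and similarly for $\tilde v$, while $\nabla\tilde u=\nabla u$ and $\nabla\tilde v=\nabla v$ on $\Omega$. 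Theorem~\ref{Hardythm} then gives $f:=\nabla\tilde u\cdot\nabla^\perp\tilde v\in\mathcal{H}^1(\R^2)$ with
$$\|f\|_{\mathcal{H}^1(\R^2)}\le C\|\nabla\tilde u\|_{L^2(\R^2)}\|\nabla\tilde v\|_{L^2(\R^2)}\le C\|\nabla u\|_{L^2(\Omega)}\|\nabla v\|_{L^2(\Omega)},$$
and since $f=\nabla u\cdot\nabla^\perp v$ on $\Omega$, the function $w$ solves $\Delta w=f$ in $\Omega$ with its prescribed boundary condition.

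For the $L^\infty$ and $L^2$-gradient bounds, the Dirichlet case is immediate: Theorem~\ref{HardyRHS} applied to this $f$ yields $w\in C(\overline\Omega)\cap W^{1,2}_0(\Omega)$ with $\|w\|_{L^\infty(\Omega)}+\|\nabla w\|_{L^2(\Omega)}\le C(\Omega)\|f\|_{\mathcal{H}^1(\R^2)}$. For the Neumann case I would run the parallel potential-theoretic argument, writing $w=N+h$ with $N=\Gamma*f$ the Newtonian potential of the global datum $f$ (bounded on $\Omega$ by $\mathcal{H}^1$--BMO duality, the whole-space form of the $L^\infty$ estimate) and $h=w-N$ harmonic with Neumann data $-\p N/\p\nu$, controlled by the potential theory on the disk. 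Once $\|w\|_{L^\infty(\Omega)}$ is in hand, the gradient bound follows uniformly for both boundary conditions from a single integration by parts, since the boundary term vanishes in either case:
$$\int_\Omega|\nabla w|^2=-\int_\Omega wf+\int_{\p\Omega}w\,\frac{\p w}{\p\nu}=-\int_\Omega wf\le\|w\|_{L^\infty(\Omega)}\|f\|_{L^1(\R^2)}\le C\|f\|_{\mathcal{H}^1(\R^2)}^2,$$
the integration by parts being justified by approximation.

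The main work, and the step I expect to be the chief obstacle, is the $L^1$ bound on $D^2w$, which is not delivered by Theorems~\ref{Hardythm}--\ref{HardyRHS} and genuinely needs Calder\'on--Zygmund theory on Hardy spaces---this is Wente's refinement \cite{W}. Using again $w=N+h$, the second derivatives $D^2N$ are Calder\'on--Zygmund singular integrals of $f$; since such operators are bounded on $\mathcal{H}^1(\R^2)$, one gets $D^2N\in\mathcal{H}^1(\R^2)\hookrightarrow L^1(\R^2)$ with $\|D^2N\|_{L^1(\R^2)}\le C\|f\|_{\mathcal{H}^1(\R^2)}$. The remainder $h=w-N$ is harmonic in $\Omega$, hence smooth in the interior, and up to the boundary its Hessian is controlled through the boundary data ($N|_{\p\Omega}$ in the Dirichlet case, $\p N/\p\nu|_{\p\Omega}$ in the Neumann case) by elliptic estimates on the smooth domain $\Omega$. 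Summing the two contributions gives $\|D^2w\|_{L^1(\Omega)}\le C\|f\|_{\mathcal{H}^1(\R^2)}\le C\|\nabla u\|_{L^2(\Omega)}\|\nabla v\|_{L^2(\Omega)}$. The delicate point is precisely the up-to-the-boundary $L^1$ control of the Hessian: the interior bound is a clean consequence of Hardy-space Calder\'on--Zygmund theory, but matching it to the harmonic corrector uses the regularity of $\p\Omega$ (here a disk), and it is here, rather than in the $L^\infty$ or $\nabla w$ estimates, that the argument is most sensitive.
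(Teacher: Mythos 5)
The paper does not actually prove this lemma: it is recalled as a known result of Wente \cite{W} (see also \cite{CLMS} and \cite[Chapter 3]{H}), so there is no in-paper argument to measure yours against. Your architecture --- subtract averages, extend to $\R^2$, invoke Theorem \ref{Hardythm} to place $f=\nabla\tilde u\cdot\nabla^{\perp}\tilde v$ in $\mathcal{H}^1(\R^2)$, handle the Dirichlet case by Theorem \ref{HardyRHS}, get the gradient bound by integration by parts, and obtain $D^2w\in L^1$ from the boundedness of Calder\'on--Zygmund operators on $\mathcal{H}^1$ --- is exactly the standard route in those references, and the Dirichlet $L^{\infty}$ and $W^{1,2}$ estimates are complete as you present them.

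Two steps, however, are asserted at precisely the point where the real work lies, and as written they do not go through. First, in the Neumann case you control the harmonic corrector $h=w-N$ through its Neumann datum $-\p N/\p\nu$ on $\p\Omega$. From what you have established, $\nabla N\in W^{1,1}(\R^2)$, whose trace on $\p\Omega$ lies only in $L^1(\p\Omega)$; a harmonic function with Neumann data merely in $L^1$ of the boundary need not be bounded (the Neumann kernel of the disk has a logarithmic singularity), let alone have Hessian in $L^1(\Omega)$. One must exploit the finer fact that $D^2N=(R_iR_jf)_{i,j}$ lies in $\mathcal{H}^1(\R^2)$ and not just in $L^1$, or bypass the corrector by reflection across $\p B_1$ or the explicit Green/Neumann kernels of the disk, which is what the cited proofs do. Second, ``elliptic estimates on the smooth domain'' cannot deliver $\|D^2h\|_{L^1(\Omega)}$ from boundary data: Calder\'on--Zygmund theory is false in $L^1$ (Ornstein's non-inequality), so the up-to-the-boundary $L^1$ Hessian bound for the corrector must again be extracted from the refined regularity of the traces of $N$ (for instance $N|_{\p\Omega}\in B^1_{1,1}(\p\Omega)$ because $N\in W^{2,1}(\R^2)$), not from generic elliptic theory. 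You correctly identify this as the sensitive point, but the proposal names the obstacle rather than closing it.
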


\section{Proofs of Theorem \ref{mainthm} and Corollary \ref{maincor}}
\label{mainthm_sec}
Assuming Theorem \ref{bthm}, we prove Theorem \ref{mainthm} and Corollary \ref{maincor} in this section. Let $\e_1$ be the positive number given by Theorem \ref{bthm}. Let $\e_0=\e_1/2$. Let $u\in W^{1, 2}(\Omega)$ satisfy (\ref{the_eq}) in $\Omega$. 
\subsection{Rescaling}
\label{res_sec}
Theorem \ref{mainthm} and Corollary \ref{maincor} are of local nature so it suffices to prove the optimal H\"older continuity of $u$ in a small ball $B_r(x_0)\subset\Omega$ round each $x_0\in\Omega$. 
We rescale the equation (\ref{the_eq}) in $B_r(x_0)$
to $B_1(0)$
by setting
$$\tilde u(x)= u(x_0 + rx), ~\tilde b(x)= r b (x_0+ rx).$$ 
Then $\tilde u\in W^{1,2}(B_1(0))$ solves
\begin{equation}-\Delta \tilde u = \tilde b\cdot \nabla \tilde u~\text{in } B_1(0).
 \label{the_eqtilde}
\end{equation}
Furthermore,
\begin{equation}\|\tilde b\|_{L^2(B_1(0))}=\|b\|_{L^2(B_r(x_0))}~\text{and}~
\|\div ~\tilde b\|_{\mathcal{H}^1(\R^2)} = \|\div b\|_{\mathcal{H}^1(\R^2)}.
\label{tbeq}
\end{equation}
By the Dominated Convergence theorem,
$\|b\|_{L^2(B_r(x_0))}\rightarrow 0~\text{as }r\rightarrow 0.$
Thus, we can fixed a radius $r>0$ so that
$\|b\|_{L^2(B_r(x_0))}<\e_0.$\\
From now on, we can assume $\Omega= B_1(0)$ with the following smallness condition on $\|\tilde b\|_{L^2(B_1(0))}$:
\begin{equation}\|\tilde b\|_{L^2(B_1(0))}<\e_0.
 \label{tbsmall}
\end{equation}
\subsection{Proofs of Theorem \ref{mainthm}}
\begin{proof}[Proof of Theorem \ref{mainthm} via conservation laws]
Suppose that $\|\div b\|_{\mathcal{H}^1(\R^2)}\leq \e_0$. 
Then from (\ref{tbeq}) and (\ref{tbsmall}), we have $$\|\tilde b\|_{L^2(B_1(0))} + \|\div ~\tilde b\|_{\mathcal{H}^1(\R^2)}\leq 2\e_0=\e_1.$$ 
Applying Theorem \ref{bthm}, we find 
$A\in W^{1,2}(B_1(0))\cap L^{\infty}(B_1(0))$ and $B\in W^{1,2}(B_1(0))$ such that
 $A^{-1}\in L^{\infty}(B_1(0))$,  $A=1$ on $\p B_1(0)$, $\int_{B_1(0)} B=0$, and
 \begin{equation*}\tilde b=A^{-1}\nabla A + A^{-1}\nabla^{\perp} B.
 \end{equation*}
This together with (\ref{the_eqtilde}) gives
$$\div (A\nabla \tilde u - B\nabla^{\perp} \tilde u)=0.$$
Thus, we have just converted (\ref{the_eq}) into a conservation law.
By \cite[Theorem 4.3]{R2} or the proof of \cite[Theorem 1.1]{R1}, $\tilde u\in W^{1, p}_{\text{loc}}(B_1(0))$ for all $p\in (1, \infty)$.  Now, the right hand side of (\ref{the_eqtilde}) belongs to $L^q_{\text{loc}} (B_1(0))$
for all $q\in (1, 2)$ and hence $\tilde u\in W^{2, q}_{\text{loc}}(B_1(0))$ for all $q\in (1, 2)$.
Therefore, by the Sobolev embedding theorem, $\tilde u\in C^{\alpha}_{\text{loc}}(B_1(0))$ for all $\alpha\in (0, 1)$. Rescaling back, we obtain
the desired regularity for $u$.
\end{proof}

\begin{proof}[Proof of Theorem \ref{mainthm} via uniqueness] 
Instead of using \cite{R1, R2}, we can give a direct and short proof of Theorem \ref{mainthm} using 
the uniqueness approach of Filonov \cite{F}.
In \cite[Theorem 1.2]{F}, Filonov proved\footnote{Our sign is different from \cite{F}. Filonov considered $-\Delta v + \tilde b\cdot \nabla v= 0$.} that $W^{1,2}$ solutions of (\ref{the_eqtilde}) belong to $ W^{2, q}_{\text{loc}}(B_1(0))$ for all $1<q<2$ provided that the equation
\begin{equation}
\Delta v + \tilde b\cdot \nabla v= 0~ \mbox{ in}\quad B_1(0),~\text{with}~
v  =0~\text{on}~\p B_1(0).
\label{homoeq}
\end{equation}
has a unique solution $v=0$ in $W^{1,2}_0(B_1(0))$. We prove that this is indeed the case in the context of Theorem \ref{mainthm}. As above, we have
$\tilde b= A^{-1} (\nabla A +\nabla^{\perp} B)$
and hence (\ref{homoeq}) becomes
\begin{equation}
\left\{\begin{array}{rl}
\div (A\nabla v) + \nabla^{\perp}B\cdot \nabla v&= 0\ \ \ \ \qquad \mbox{ in}\quad B_1(0),\\
v & =0~\ \ \ \qquad\text{on}~\p B_1(0).
\end{array}\right.
\label{homoeq2}
\end{equation}
Since $A^{-1}\in L^{\infty}(B_1(0))$, we prove that $v=0$ by showing
$\displaystyle\int_{B_1(0)} A|\nabla v|^2=0.$
Following the idea of the proof of \cite[Lemma 2.6]{F}, we choose a sequence $\psi_n\in C^{\infty}_0(B_1(0))$ such that $\psi_n\rightarrow v$ in $W^{1,2} (B_1(0))$. From
\begin{eqnarray*}\int_{B_1(0)} A|\nabla v|^2 &=&\int_{B_1(0)} A\nabla v\cdot \nabla \psi_n + \int_{B_1(0)} A\nabla v\cdot (\nabla v-\nabla \psi_n)\\ &\leq& \int_{B_1(0)} A\nabla v\cdot\nabla \psi_n
+
\|A\|_{L^{\infty}(B_1(0))}\|\nabla v\|_{L^{2}(B_1(0))}\|\nabla v-\nabla \psi_n\|_{L^{2}(B_1(0))},
\end{eqnarray*}
and $\|\nabla v-\nabla \psi_n\|_{L^{2}(B_1(0))}\rightarrow 0$ when $n\rightarrow \infty$, it remains to show that
\begin{equation}\int_{B_1(0)} A\nabla v\cdot\nabla \psi_n\rightarrow 0~\text{when}~n\rightarrow \infty.
\label{vanish1}
\end{equation}
To see this, multiplying both sides of (\ref{homoeq2}) by $\psi_n$ and using that
$$\int_{B_1(0)} \nabla^{\perp} B \cdot\nabla\psi_n \psi_n=\int_{B_1(0)} \nabla^{\perp} B \cdot\nabla(\psi^2_n/2) =0$$
which follows from integrating by parts and $\div \nabla^{\perp} B=0$,  we obtain
$$\int_{B_1(0)} A\nabla v\cdot\nabla \psi_n = \int_{B_1(0)} \nabla^{\perp} B \nabla v\psi_n =\int_{B_1(0)} \nabla^{\perp} B \cdot \nabla (v-\psi_n) \psi_n.$$
Since $\div (\nabla^{\perp}B)=0$, by \cite[Lemma 2.4]{F}, and the fact  that $\psi_n\rightarrow v$ in $W^{1,2} (B_1(0))$, we have
$$\int_{B_1(0)} \nabla^{\perp} B \cdot \nabla (v-\psi_n) \psi_n \leq C \|\nabla B\|_{L^2 (B_1(0))} \|\nabla v-\nabla \psi_n\|_{L^2 (B_1(0))} \|\nabla \psi_n\|_{L^2 (B_1(0))}\rightarrow 0 ~\text{when}~n\rightarrow \infty.$$
Therefore, we obtain (\ref{vanish1}) and the proof of Theorem \ref{mainthm} is complete.
\end{proof}
\begin{proof}[Proof of Corollary \ref{maincor}]
Suppose that $b= h \nabla^{\perp} v$ where $h\in L^{\infty}(\Omega)\cap W^{1,2}(\Omega)$ and $v\in W^{1,2}(\Omega)$. Corresponding to the rescalings of $u$ and $b$ in Section
\ref{res_sec}, we also rescale
$h$ and $v$ as follows:
$$\tilde h(x)= h (x_0 + rx)-c_1, \tilde v(x)= v(x_0 + rx)-c_2$$
where $c_1$ and $c_2$ are constants so that
$\int_{B_1(0)} \tilde h=\int_{B_1(0)} \tilde v= 0.$
Note that
$\|\nabla \tilde v\|_{L^{2}(B_1(0))}= \|\nabla v\|_{L^2 (B_r(x_0))}$
and by Poincar\'e's inequality, we have
\begin{equation}\|\tilde v\|_{W^{1,2}(B_1(0))}\leq C \|\nabla \tilde v\|_{L^{2}(B_1(0))}\leq C \|\nabla v\|_{L^2 (B_r(x_0))}.
 \label{tveq1}
\end{equation}
Similarly, we have
\begin{equation}\|\tilde h\|_{W^{1,2}(B_1(0))} \leq C \|\nabla h\|_{L^2 (B_r(x_0))}
 \label{theq1}
\end{equation}
We can extend $\tilde h$ and $\tilde v$ to be compactly supported functions
 in $\R^2$ such that
 $\tilde h\in L^{\infty}(\R^2)\cap W^{1,2}(\R^2), \tilde v\in W^{1,2}(\R^2)$, $\|\tilde h\|_{L^{\infty}(\R^2)}\leq C \|\tilde h\|_{L^{\infty}(B_1(0))},$
 and
\begin{equation}\|\tilde h\|_{W^{1, 2}(\R^2)}\leq C \|\tilde h\|_{W^{1, 2}(B_1(0))},~\|\tilde v\|_{W^{1, 2}(\R^2)}\leq C \|\tilde v\|_{W^{1, 2}(B_1(0))}.
\label{exthv} 
\end{equation}
With these extensions, we have  by Theorem \ref{Hardythm}, $\div \tilde b= \nabla \tilde h\cdot \nabla^{\perp } \tilde v\in \mathcal{H}^1(\R^2)$
 and
 $$\|\div \tilde b\|_{\mathcal{H}^1(\R^2)}\leq C\|\nabla \tilde h\|_{L^2(\R^2)}\|\nabla \tilde v\|_{L^2(\R^2)}.$$
It follows from  (\ref{tveq1}), (\ref{theq1}) and (\ref{exthv}) that
$$\|\div \tilde b\|_{\mathcal{H}^1(\R^2)}\leq C \|\nabla h\|_{L^2 (B_r(x_0))} \|\nabla v\|_{L^2 (B_r(x_0))}.$$
Using the Dominated Convergence theorem, we can now further reduce the small radius $r$ in Section \ref{res_sec} so that $\|\div \tilde b\|_{\mathcal{H}^1(\R^2)}\leq \e_0.$
Applying Theorem \ref{mainthm} to (\ref{the_eqtilde}), we obtain the conclusion of the corollary.
\end{proof}

\section{Proof of Theorem \ref{bthm}}
\label{bthm_sec}
\subsection{Uhlenbeck-Rivi\`ere decomposition } 
\label{Hodge_sec}
Recall that $\Omega= B_1(0)$ and $\div b\in \mathcal{H}^1(\R^2)$. Let $\tau= (-y, x)$, and $\nu= (x, y)$ denote the unit tangential and normal vectorfields on $\p\Omega$. 
We use the Hodge decomposition
\begin{equation}b=\nabla^{\perp} \xi -\nabla p,~\text{where}~p=0~\text{on}~\p\Omega.
 \label{Hdecomp}
\end{equation}
To do this,
let $p\in L^1_{\text{loc}}(\Omega)$ solve
$$-\Delta p= \div b~\text{in}~\Omega, ~p=0~\text{on}~\p\Omega.$$
Because $\div b \in\mathcal{H}^{1}(\R^2)$, by Theorem \ref{HardyRHS}, we have $p\in W^{1,2}(\Omega)\cap L^{\infty}(\Omega)$. Furthermore,
\begin{equation}\|p\|_{L^{\infty}(\Omega)} + \|\nabla p\|_{L^{2}(\Omega)}  \leq C \|\div b\|_{\mathcal{H}^1(\R^2)}.
 \label{pbound}
\end{equation}
With the above $p$, we have
$\div (b + \nabla p)=0$
so we can find $\xi\in W^{1,2}(\Omega)$ such that (\ref{Hdecomp}) holds. Now, suppose that we have a smallness condition on $b$, precisely, for some small $\e_1>0$ to be determined,
\begin{equation}\|b\|_{L^2({\Omega})} + \|\div b\|_{\mathcal{H}^1(\R^2)}\leq \e_1.
\label{smallbH}
\end{equation}

From (\ref{pbound}) and (\ref{Hdecomp}), we have
\begin{equation}\|p\|^2_{L^\infty(\Omega)} + \int_{\Omega}|\nabla \xi|^2 + \int_{\Omega}|\nabla p|^2 \leq C \left(\int_{\Omega} |b|^2 + \|\div b\|^2_{\mathcal{H}^1(\R^2)}\right).
 \label{Dpxi}
\end{equation}
Inspired by Rivi\`ere \cite{R1, R2}, we now rewrite (\ref{Hdecomp}) into the Uhlenbeck-Rivi\`ere decomposition (also known as the nonlinear Hodge decomposition).
Let
$P= e^{p}.$ Then (\ref{Hdecomp}) becomes a nonlinear decomposition
\begin{equation}b=\nabla^{\perp} \xi-P^{-1}\nabla P.
 \label{beq}
\end{equation}
We will use (\ref{beq}) to convert (\ref{the_eq})
into a {\it conservation law}.
Note that,
$\nabla P= e^{p}\nabla p,~\nabla P^{-1} = -e^{-p}\nabla p.$\\
Let $\e\in (0, \frac{1}{100})$ be a small constant to be chosen in Lemma \ref{ABlem} below. With this $\e$, 
we choose  $\e_1$ small so that from (\ref{smallbH}) and (\ref{Dpxi}), we have
\begin{equation}\int_{\Omega} |\nabla\xi|^2 + |\nabla P|^2 + |\nabla P^{-1}|^2<\e
 \label{DPeps}
\end{equation}
and \begin{equation}\|P\|_{L^{\infty}(\Omega)}\leq 1+\e.
     \label{Peps}
    \end{equation}
It follows that
\begin{equation}1/10\leq \|P\|_{L^{\infty}(\Omega)}, \|P^{-1}\|_{L^{\infty}(\Omega)}\leq 10.
 \label{Pbound}
\end{equation}
Recall that
$P=1~\text{on}~\p\Omega.$
To prove Theorem \ref{bthm}, it remains to prove the following lemma.
\begin{lem}
\label{ABlem} Assume that (\ref{DPeps}) and (\ref{Peps}) hold. If $\e$ is sufficiently small then
 there exist $A\in W^{1,2}(\Omega)\cap L^{\infty}(\Omega)$ and $B\in W^{1,2}(\Omega)$ such that
 $A^{-1}\in L^{\infty}(\Omega)$,  $A=1$ on $\p\Omega$, $\int_\Omega B=0$, and
 \begin{equation}Ab=\nabla A + \nabla^{\perp} B,
  \label{ABeq}
 \end{equation}
with
\begin{equation*}
 \|AP-1\|^2_{L^{\infty}(\Omega)} + \|\nabla (AP)\|^2_{L^{2}(\Omega)} + \|\nabla B\|^2_{L^{2}(\Omega)}\leq C\e.
\end{equation*}
\end{lem}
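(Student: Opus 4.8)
The plan is to reformulate the algebraic identity (\ref{ABeq}) in terms of the single scalar unknown $W:=AP$ and to decouple it into two linear second-order problems. Recall from (\ref{beq}) that $b=\nabla^{\perp}\xi-P^{-1}\nabla P=\nabla^{\perp}\xi-\nabla p$ with $P=e^{p}$. A direct computation, using $P^{-1}\nabla W=P^{-1}\nabla(AP)=\nabla A+AP^{-1}\nabla P=\nabla A+A\nabla p$, shows that (\ref{ABeq}) is equivalent to the single vector identity
\[
G:=\nabla W-W\,\nabla^{\perp}\xi+P\,\nabla^{\perp}B=0 ,\qquad W=AP .
\]
Since $P=1$ on $\partial\Omega$, the condition $A=1$ on $\partial\Omega$ becomes $W=1$ on $\partial\Omega$. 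Thus it suffices to produce $W\in W^{1,2}(\Omega)\cap L^{\infty}(\Omega)$ with $W=1$ on $\partial\Omega$ and $W$ bounded away from $0$, together with $B\in W^{1,2}(\Omega)$, $\int_{\Omega}B=0$, solving $G=0$, and then to set $A:=W/P$. Taking the divergence and the scalar curl of $G=0$ and using $\operatorname{div}\nabla^{\perp}=0$ and $\operatorname{curl}(f\nabla^{\perp}g)=\operatorname{div}(f\nabla g)$, I obtain the decoupled system
\[
\Delta W=\nabla W\cdot\nabla^{\perp}\xi-\nabla P\cdot\nabla^{\perp}B \ \text{ in }\Omega,\quad W=1 \ \text{ on }\partial\Omega \qquad(\mathrm{A})
\]
\[
\operatorname{div}(P\nabla B)=\operatorname{div}(W\nabla\xi)\ \text{ in }\Omega,\quad \text{natural Neumann BC},\ \int_{\Omega}B=0 .\qquad(\mathrm{B})
\]
The crucial point is that the right-hand side of $(\mathrm{A})$ is a sum of two Jacobians, namely $\nabla W\cdot\nabla^{\perp}\xi$ and $\nabla B\cdot\nabla^{\perp}P$ (after the antisymmetry $-\nabla P\cdot\nabla^{\perp}B=\nabla B\cdot\nabla^{\perp}P$), so $(\mathrm{A})$ is amenable to the Wente estimate of Lemma \ref{HardyNeu}; meanwhile $(\mathrm{B})$ is a uniformly elliptic Neumann problem since $1/10\le P\le10$ by (\ref{Pbound}).

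For existence I exploit the smallness (\ref{DPeps}), which gives $\|\nabla\xi\|_{L^{2}(\Omega)}+\|\nabla P\|_{L^{2}(\Omega)}\le 2\sqrt{\e}$. Starting from $W_{0}\equiv1$, $B_{0}\equiv0$, I define $W_{k+1}$ as the solution of the Poisson problem $(\mathrm{A})$ with right-hand side $\nabla W_{k}\cdot\nabla^{\perp}\xi-\nabla P\cdot\nabla^{\perp}B_{k}$ and $W_{k+1}=1$ on $\partial\Omega$, and then $B_{k+1}$ as the solution of $(\mathrm{B})$ with $W=W_{k+1}$. Applying Lemma \ref{HardyNeu} (Dirichlet case) to $W_{k+1}-1$ yields
\[
\|W_{k+1}-1\|_{L^{\infty}}+\|\nabla W_{k+1}\|_{L^{2}}\le C\|\nabla W_{k}\|_{L^{2}}\|\nabla\xi\|_{L^{2}}+C\|\nabla B_{k}\|_{L^{2}}\|\nabla P\|_{L^{2}},
\]
while the energy estimate for the uniformly elliptic problem $(\mathrm{B})$ gives $\|\nabla B_{k+1}\|_{L^{2}}\le C\|W_{k+1}\|_{L^{\infty}}\|\nabla\xi\|_{L^{2}}$. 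Because the coupling constants are $O(\sqrt{\e})$, the map $(W_{k},B_{k})\mapsto(W_{k+1},B_{k+1})$ is a contraction in the norm $\|\nabla W\|_{L^{2}}+\|W\|_{L^{\infty}}+\|\nabla B\|_{L^{2}}$ once $\e$ is small; here the $L^{\infty}$ bound on $W_{k+1}-W_{k}$ furnished by Wente's estimate is precisely what controls $\|(W_{k+1}-W_{k})\nabla\xi\|_{L^{2}}$ in the estimate for $B_{k+1}-B_{k}$. Passing to the limit gives $(W,B)$ solving $(\mathrm{A})$--$(\mathrm{B})$, and feeding the two estimates into one another and absorbing the $O(\sqrt{\e})$ self-interaction term yields $\|W-1\|_{L^{\infty}}+\|\nabla W\|_{L^{2}}+\|\nabla B\|_{L^{2}}\le C\sqrt{\e}$, which is the asserted bound $\|AP-1\|^{2}_{L^{\infty}}+\|\nabla(AP)\|^{2}_{L^{2}}+\|\nabla B\|^{2}_{L^{2}}\le C\e$ since $W=AP$.

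It remains to set $A:=W/P$ and to verify the vector identity. By (\ref{Pbound}) and $\|W-1\|_{L^{\infty}}\le C\sqrt{\e}<\tfrac12$, $W$ is bounded above and below, so $A,A^{-1}\in L^{\infty}(\Omega)$; as a product of $W^{1,2}\cap L^{\infty}$ functions $A\in W^{1,2}(\Omega)$, and $A=1$ on $\partial\Omega$. To see $G=0$ (equivalently (\ref{ABeq})), note that $(\mathrm{A})$ gives $\operatorname{div}G=0$ and $(\mathrm{B})$ gives $\operatorname{curl}G=0$, so on the simply connected $\Omega$ we may write $G=\nabla h$ with $h$ weakly harmonic. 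Testing against rotated gradients, for every $\zeta\in W^{1,2}(\Omega)$ and using $\nabla^{\perp}\xi\cdot\nabla^{\perp}\zeta=\nabla\xi\cdot\nabla\zeta$,
\[
\int_{\Omega}G\cdot\nabla^{\perp}\zeta=\int_{\Omega}\nabla W\cdot\nabla^{\perp}\zeta-\int_{\Omega}W\,\nabla\xi\cdot\nabla\zeta+\int_{\Omega}P\,\nabla B\cdot\nabla\zeta .
\]
The first integral vanishes because $W-1\in W^{1,2}_{0}(\Omega)$ and $\operatorname{div}\nabla^{\perp}\zeta=0$, and the remaining two cancel by the weak (Neumann) formulation of $(\mathrm{B})$, namely $\int_{\Omega}P\nabla B\cdot\nabla\zeta=\int_{\Omega}W\nabla\xi\cdot\nabla\zeta$. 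Hence $\int_{\Omega}\nabla h\cdot\nabla^{\perp}\zeta=0$ for all $\zeta$, i.e.\ the tangential derivative of $h$ on $\partial\Omega$ vanishes; being harmonic with constant boundary trace, $h$ is constant, so $G=\nabla h=0$. This establishes (\ref{ABeq}) and completes the lemma.

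The main obstacle is not any single estimate but the coupling together with the reconstruction of the full vector identity from its divergence and curl. Two structural facts make it work and deserve emphasis. First, the passage to $W=AP$ is exactly what converts the a priori only $L^{1}$ products $\nabla A\cdot\nabla p$ arising from the $P^{-1}\nabla P$ part of $b$ into honest Jacobians $\nabla P\cdot\nabla^{\perp}B$, so that the $\mathcal{H}^{1}$-Wente estimate of Lemma \ref{HardyNeu} applies and the borderline $L^{1}$ loss is avoided. Second, the correct (natural Neumann) boundary condition for $B$ is precisely what forces the tangential part of $G$ to vanish, ruling out a nonconstant harmonic remainder. The delicate bookkeeping — absorbing the $O(\sqrt{\e})$ self-interaction $\nabla W\cdot\nabla^{\perp}\xi$ in $(\mathrm{A})$ and propagating the $L^{\infty}$ control of $W$ into the estimate for $B$ — is where the smallness hypothesis (\ref{DPeps})--(\ref{Peps}) is genuinely used.
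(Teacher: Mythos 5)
Your proposal is correct, and at its core it is the same argument as the paper's: the change of unknown $W=AP$ (the paper's $\tilde A$), the reformulation of \eqref{ABeq} as the vector identity \eqref{ABcheck} (your $G=0$), a Wente-based iteration made to converge by the smallness \eqref{DPeps}--\eqref{Peps}, and a final reconstruction of the vector identity from its divergence and curl. Two of your choices genuinely diverge from the paper and are worth recording. First, for the $B$-equation you take the curl of $G$ in the divergence form $\div(P\nabla B)=\div(W\nabla \xi)$ and solve a uniformly elliptic Neumann problem by Lax--Milgram (coercive thanks to \eqref{Pbound}), so that only the $W$-equation requires Wente's Lemma \ref{HardyNeu}; the paper instead writes the equivalent equation $\Delta B=\div(\tilde A\nabla\xi P^{-1})+\nabla^{\perp}\tilde A\cdot\nabla P^{-1}$ and must invoke Wente for the Jacobian term $\nabla^{\perp}\tilde A\cdot\nabla P^{-1}$ as well. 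Your variant trades one compensated-compactness estimate for an energy estimate, using the $L^{\infty}$ bound on $W_{k+1}-W_k$ (which Wente supplies anyway) to close the contraction --- this is a legitimate and slightly leaner bookkeeping. Second, in the reconstruction step the paper introduces the auxiliary Hodge decomposition $G=\nabla E+\nabla^{\perp}D$ in \eqref{Hdecomp2}, kills $E$ by harmonicity, and shows $D$ is constant via the identity \eqref{Diden} together with the boundary computation $D_{\nu}=0$, which forces it to first mollify $b$ so that $b\cdot\tau$ has a trace as in \eqref{xibdr}. You instead observe that $G$ is curl-free and divergence-free, write $G=\nabla h$ with $h$ harmonic, and use the weak Neumann formulation of the $B$-equation to get $\int_{\Omega}G\cdot\nabla^{\perp}\zeta=0$ for all $\zeta\in W^{1,2}(\Omega)$, whence $h$ has constant tangential derivative on $\p\Omega$ and is constant. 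This avoids the smooth-approximation preamble entirely, since the natural boundary condition is encoded weakly. Both routes ultimately rest on the same two structural facts you highlight: multiplying by $P$ turns the dangerous $L^{1}$ products into honest Jacobians, and the natural boundary condition for $B$ is what rules out a nonconstant harmonic remainder.
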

\subsection{Proof of Lemma \ref{ABlem}}
\begin{proof}[Proof of Lemma \ref{ABlem}]
Notice that, by an approximation argument using the standard mollifications, it suffices to prove the lemma for smooth vectorfields $b$. In this case,  we have on $\p\Omega$
\begin{equation}
 b\cdot\tau=(\nabla^{\perp} \xi -\nabla p)\cdot\tau= \frac{\p \xi}{\p \nu}.
 \label{xibdr}
\end{equation}
The function $\xi$ in (\ref{Hdecomp}) can be chosen to be the smooth solution 
to
$$-\Delta \xi= \text{curl } b= \p_2 b_1- \p_1 b_2~ \mbox{ in}~ \Omega,~\frac{\p \xi}{\p \nu}  =b\cdot\tau~\text{on}~\p\Omega~\text{and}~\int_{\Omega} \xi = 0.$$
In what follows, we will use equation (\ref{xibdr}).
We use a fixed point argument as in Rivi\`ere \cite{R1, R2}. Let $P$ be as in Sect. \ref{Hodge_sec}. To each $A\in W^{1,2}(\Omega)$, we associate $\tilde A= A P$. Suppose that $A$ and $B$ are solutions of
(\ref{ABeq}). Then, recalling (\ref{beq}),  and noting that $\nabla P P^{-1}=-P\nabla P^{-1}$, we have
$$\nabla \tilde A-\tilde A\nabla^{\perp}\xi + \nabla^{\perp} B.P=0~\text{and}~\nabla^{\perp}B = Ab-\nabla A= 
A \nabla^{\perp}\xi + AP\nabla P^{-1}-\nabla A.$$
Taking the divergence of the first equation and taking the curl (=$-\nabla^{\perp}$) of the second equation yield
$$\Delta \tilde A =\nabla\tilde A\cdot \nabla^{\perp} \xi- \nabla^{\perp} B\cdot\nabla P~\text{and}~\Delta B=\nabla^{\perp} (A \nabla^{\perp}\xi + AP\nabla P^{-1})= \div (A\nabla \xi) +
\nabla^{\perp}\tilde A\cdot \nabla P^{-1}.$$
We now proceed as follows.\\
{\bf Step 1.} We prove,  provided $\e$ is sufficiently small, the
existence of a solution
$(\tilde A, B)$ of the system
\begin{equation}
\displaystyle
\left\{\begin{array}{rl}
\Delta \tilde A &=\nabla\tilde A\cdot \nabla^{\perp} \xi- \nabla^{\perp} B\cdot\nabla P \ \ \ \ \qquad \quad \mbox{in}~ \Omega,\\
\Delta B  &=\div (\tilde A \nabla\xi P^{-1}) + \nabla^{\perp} \tilde A\cdot\nabla P^{-1}\ \ \ \  \mbox{in}~ \Omega,\\
\tilde A & =1~\text{and}~\frac{\p B}{\p \nu}=b\cdot \tau~\quad \quad \quad\quad\quad\quad \quad\text{on}~\p\Omega,\\
\int_{\Omega} B&= 0,
\end{array}\right.
\label{tABsys}
\end{equation}
with
\begin{equation*}
 \|\tilde A-1\|^2_{L^{\infty}(\Omega)} + \|\nabla \tilde A\|^2_{L^{2}(\Omega)} + \|\nabla B\|^2_{L^{2}(\Omega)}\leq C\e.
\end{equation*}
{\bf Step 2.} We show that (\ref{tABsys}) implies
\begin{equation}\nabla \tilde A-\tilde A\nabla^{\perp}\xi + \nabla^{\perp} B.P=0.
 \label{ABcheck}
\end{equation}
{\it Let us indicate how Steps 1 and 2  complete the proof of Lemma \ref{ABlem}.}
Assuming (\ref{ABcheck}), we find from $\tilde A= AP$ that
$P\nabla A + A\nabla P-AP\nabla^{\perp}\xi + \nabla^{\perp} B.P=0.$
Since $P$ is invertible, we obtain
$$\nabla A + A \nabla P P^{-1}-A\nabla^{\perp}\xi + \nabla^{\perp} B=0.$$
Therefore, recalling (\ref{beq}), we obtain (\ref{ABeq}). The last estimate in Lemma \ref{ABlem} follows from the last estimate in Step 1 and the fact that $\tilde A= AP$. The proof
of Lemma \ref{ABlem} is complete.
\\
{\it Proof of Step 1.}
To prove  the
existence of a solution
$(\tilde A, B)$ of (\ref{tABsys}), we will use a fixed point argument as in Rivi\`ere \cite{R1, R2}. 
Let us denote for $g\in H^{1/2}(\p\Omega)$ the space
$W^{1,2}_g(\Omega)=\{u\in W^{1,2}(\Omega), u= g~\text{on}~ \p\Omega\}.$
Consider the map 
$f(\hat A,\hat B)= (\tilde A, B)$
from
$$X= \left(W^{1,2}_1(\Omega)\cap L^{\infty}(\Omega)\right)\times W^{1,2}(\Omega)$$
into itself, where for given $(\hat A,\hat B)\in X$, the pair $(\tilde A, B)$ solves the system
\begin{equation}
\displaystyle
\left\{\begin{array}{rl}
\Delta (\tilde A-1) &=\nabla(\hat A-1)\cdot \nabla^{\perp} \xi- \nabla^{\perp}\hat B \cdot\nabla P \ \ \ \quad\quad \quad\quad\quad \quad\mbox{in}~\Omega,\\
\Delta (B-B_0)  &=\div ((\hat A-1) \nabla\xi P^{-1}) + \nabla^{\perp} (\hat A-1)\cdot\nabla P^{-1}\ \ \ \  \ \mbox{in}~ \Omega,\\
\tilde A & =1~\text{and}~\frac{\p B}{\p \nu}=b\cdot \tau~\ \quad\quad \quad\quad\quad\quad\quad\quad \quad\quad\quad \quad \text{on}~\p\Omega,\\
\int_{\Omega} B&= 0.
\end{array}\right.
\label{tABsys2}
\end{equation}
Here, the function $B_0\in W^{1, 2}(\Omega)$ 
is the solution to
 \begin{equation}
\left\{\begin{array}{rl}
\Delta B_0&= \div (\nabla\xi P^{-1})   \ \qquad \mbox{ in}~\Omega,\\
\frac{\p B_0}{\p \nu} & =b\cdot\tau~\quad\quad\quad\quad\quad\quad \text{on}~\p\Omega,\\
\int_{\Omega} B_0&= 0.
\end{array}\right.
\label{B0eq}
\end{equation}
Clearly, a fixed point of (\ref{tABsys2}) is a solution of (\ref{tABsys}).

By (\ref{xibdr}),  the equation (\ref{B0eq}) has a unique solution. Multiplying both sides of the first equation of (\ref{B0eq}) by $B_0$, 
 integrating by parts, we find from $P=1$ on $\p\Omega$  and (\ref{xibdr}) that
$\int_{\Omega} |\nabla B_0|^2= \int_{\Omega}  \nabla\xi P^{-1}\cdot \nabla B_0 $
from which we can estimate
the gradient of $B_0$ by
\begin{equation}\int_{\Omega} |\nabla B_0|^2\leq \int_{\Omega}|P^{-1}\nabla \xi|^2\leq \|P^{-1}\|^2_{L^{\infty}(\Omega)}\int_{\Omega}|\nabla \xi|^2.
 \label{gradB0}
\end{equation}
Applying Lemma \ref{HardyNeu} to $\tilde A-1$ and recalling the first and third equations in (\ref{tABsys2}), we find
\begin{equation}
 \|\tilde A-1\|^2_{L^{\infty}(\Omega)} + \|\nabla \tilde A\|^2_{L^2(\Omega)}
 \leq C \int_{\Omega} |\nabla \hat A|^2 \int_{\Omega} |\nabla \xi|^2
 + C\int_{\Omega} |\nabla\hat B|^2\int_{\Omega}|\nabla P|^2.
 \label{tAbound}
\end{equation}
Note that, by the  second and last equations in (\ref{tABsys2}), $B-B_0\in W^{1,2}(\Omega)$ satisfies
\begin{equation}
\left\{\begin{array}{rl}
\Delta (B-B_0) &= \div \left((\hat A-1) \nabla\xi P^{-1}\right) + \nabla^{\perp} \hat A\cdot\nabla P^{-1} \ \ \ \qquad \mbox{ in}\quad \Omega,\\
\frac{\p (B-B_0)}{\p \nu} & =0~\text{on}~\p\Omega,\\
\int_{\Omega} (B- B_0)&= 0.
\end{array}\right.
\label{BB0eq}
\end{equation}
At this point, we can use Theorem \ref{HardyNeu} and argue as in (\ref{gradB0}) to obtain the estimate
\begin{equation}
 \|\nabla (B-B_0)\|^2_{L^2(\Omega)}\leq C\|P^{-1}\|^2_{L^{\infty}(\Omega)}\|\hat A-1\|^2_{L^{\infty}(\Omega)}\int_{\Omega} |\nabla \xi|^2 + 
 C\int_{\Omega} |\nabla \hat A|^2 \int_{\Omega} |\nabla P^{-1}|^2.
 \label{BB0bound}
\end{equation}
This combined with (\ref{gradB0}) gives
\begin{multline}
\|\nabla B\|^2_{L^2(\Omega)}\leq C\|P^{-1}\|^2_{L^{\infty}(\Omega)}\|\hat A-1\|^2_{L^{\infty}(\Omega)}\int_{\Omega} |\nabla \xi|^2 + 
 C\int_{\Omega} |\nabla \hat A|^2 \int_{\Omega} |\nabla P^{-1}|^2 \\ +  C\|P^{-1}\|^2_{L^{\infty}(\Omega)}\int_{\Omega}|\nabla \xi|^2.
 \label{gradB}
\end{multline}
The above arguments show that for $(\hat A,\hat B), (\hat A_1,\hat B_1)\in X$, the pairs $(\tilde A, B)= f(\hat A,\hat B), 
(\tilde A_1, B_1)= f(\hat A_1,\hat B_1)$ satisfy the estimates
\begin{multline*}
 \|\tilde A-\tilde A_1\|^2_{L^{\infty}(\Omega)} + \|\nabla (\tilde A-\tilde A_1)\|^2_{L^2(\Omega)}
 \leq C \int_{\Omega} |\nabla (\hat A-\hat A_1)|^2 \int_{\Omega} |\nabla \xi|^2
 + C\int_{\Omega} |\nabla (\hat B-\hat B_1)|^2\int_{\Omega}|\nabla P|^2.
\end{multline*}
and
\begin{multline*}
\|\nabla (B-B_1)\|^2_{L^2(\Omega)}\leq C\|P^{-1}\|^2_{L^{\infty}(\Omega)}\|(\hat A-\hat A_1)\|^2_{L^{\infty}(\Omega)}\int_{\Omega} |\nabla \xi|^2 + 
 C\int_{\Omega} |\nabla (\hat A-\hat A_1)|^2 \int_{\Omega} |\nabla P^{-1}|^2.
\end{multline*}
Since $\int_{\Omega} (B-B_1)= 0$, we have by the Poincar\'e inequality
$\int_{\Omega} |B-B_1|^2\leq C\int_{\Omega}|\nabla (B-B_1)|^2.$
Hence, if
$\int_{\Omega} |\nabla\xi|^2 + |\nabla P|^2 + |\nabla P^{-1}|^2\leq \e$
is sufficiently small, then a standard fixed point argument in the 
space $X=\left(L^{\infty}(\Omega)\cap W_1^{1,2}(\Omega)\right)\times W^{1,2} (\Omega)$ yields the existence of a solution
$(\tilde A, B)$ of the system (\ref{tABsys}).

Furthermore, 
from (\ref{tAbound}), (\ref{gradB}) together with (\ref{DPeps}) and (\ref{Pbound}), the solution
$(\tilde A, B)$ of the system (\ref{tABsys}) satisfies
\begin{eqnarray*}\|\tilde A-1\|^2_{L^{\infty}(\Omega)} + \|\nabla \tilde A\|^2_{L^2(\Omega)} &\leq& C\e \|\nabla \tilde A\|^2_{L^2(\Omega)} + C\e |\nabla B|^2_{L^2(\Omega)}
 \\ &\leq & C\e \|\nabla \tilde A\|^2_{L^2(\Omega)} + C\e^2 (1 + \|\tilde A-1\|^2_{L^{\infty}(\Omega)} + \|\nabla \tilde A\|^2_{L^2(\Omega)}).
\end{eqnarray*}
Thus, if $C\e\leq 1/4$, we have
$$\|\tilde A-1\|^2_{L^{\infty}(\Omega)} + \|\nabla \tilde A\|^2_{L^2(\Omega)} \leq C\e.$$
By (\ref{gradB}), we then have
$ \|\nabla B\|^2_{L^2(\Omega)} \leq C\e.$
The proof of Step 1 is complete.\\
{\it Proof of Step 2.}
To show (\ref{ABcheck}), we introduce the Hodge decomposition
\begin{equation}\nabla \tilde A-\tilde A\nabla^{\perp}\xi + \nabla^{\perp} B.P= \nabla E + \nabla^{\perp} D
 \label{Hdecomp2}
\end{equation}
where $E=0$ on $\p\Omega$. Taking divergence of both sides of (\ref{Hdecomp2}), and recalling the first equation of (\ref{tABsys}), we get $\Delta E=0$ in $\Omega$. 
Hence, $E\equiv 0$ and (\ref{Hdecomp2})
reduces to
\begin{equation}\nabla \tilde A-\tilde A\nabla^{\perp}\xi + \nabla^{\perp} B.P= \nabla^{\perp} D.
 \label{Deq}
\end{equation}
It remains to show that $D$ is a constant. 
By (\ref{Deq}), and recalling (\ref{tABsys}), we have
$$\nabla D=-\nabla ^{\perp} \tilde A-\tilde A\nabla \xi+ P\nabla B~\text{and}~\Delta D= P(\tilde A\nabla\xi\cdot \nabla P^{-1} + \nabla^{\perp} \tilde A\cdot\nabla P^{-1} + \nabla B\cdot\nabla P P^{-1}).$$
It follows from a simple calculation that
\begin{equation}\div(\nabla D P^{-1}) = (-\nabla ^{\perp} \tilde A-\tilde A\nabla \xi+P \nabla B)\cdot\nabla P^{-1} + \Delta D P^{-1} = \nabla B \cdot \nabla (P P^{-1})=0.
\label{Diden}
\end{equation}
With (\ref{Diden}), we complete the proof of Step 2 as follows. Taking dot product with $\tau$ on both sides of (\ref{Deq}), and recalling that $P=\tilde A=1$ on
$\p\Omega$, we find that on $\p\Omega
=\p B_1(0)$:
$$D_{\nu}=\nabla^{\perp} D\cdot\tau= \tilde A_{\tau}-\tilde A\xi_\nu + B_\nu P=B_\nu-\xi_\nu=b\cdot\tau-b\cdot\tau =0.$$
Multiplying both sides of (\ref{Diden}) by $D$ and integrating by parts, we find
$$0=\int_\Omega \div (\nabla D\cdot P^{-1}) D= -\int_{\Omega} P^{-1}|\nabla D|^2 + \int_{\p\Omega} P^{-1}D \nabla D\cdot\nu = -\int_{\Omega} P^{-1}|\nabla D|^2.$$
Recalling (\ref{Pbound}), we obtain $\nabla D=0$ in $\Omega$ and hence $D$ is a constant in $\Omega$.
\end{proof}

\end{document}